\documentclass[12pt,reqno]{amsart}

\usepackage{amssymb,latexsym}

\usepackage{enumerate}
\allowdisplaybreaks
\usepackage[french,english]{babel}
\usepackage{amsmath}
\usepackage{graphicx}
\usepackage{amssymb}
\usepackage{bbm}
\usepackage{amsthm,mathtools}
\usepackage{ulem}
\usepackage{geometry}
\usepackage{tikz-cd}
\usepackage{mathrsfs}
\usepackage[colorinlistoftodos]{todonotes}
\usepackage{enumitem}
\usepackage{verbatim}
\usepackage[foot]{amsaddr}
\usepackage{dsfont}
\usepackage{cite}
\usepackage[T1]{fontenc}

\makeatletter

\@namedef{subjclassname@2010}{
	
	\textup{2020} Mathematics Subject Classification}

\makeatother
\newtheorem{thm}{Theorem}[section]
\newtheorem*{thm*}{Theorem}

\newtheorem{lem}[thm]{Lemma}

\theoremstyle{definition}

\numberwithin{equation}{section}

\newcommand{\bg}{\big}

\newcommand{\bgg}{\bigg}

\newcommand{\Bg}{\Big}

\newcommand{\lo}{\log_2}
\newcommand{\lt}{\log_3}
\newcommand{\inv}{^{-1}}
\newcommand{\ph}{\Phi}

\newcommand{\mca}{\mathcal{A}}

\newcommand{\mmd}{\mathrm{d}}
\newcommand{\mme}{\mathrm{e}}

\newcommand{\mit}{\mathrm{i}t}
\newcommand{\mdt}{\mathrm{d}t}

\newcommand{\whp}{\widehat{\Phi}}

\usepackage{hyperref}
\hypersetup{colorlinks=true,linkcolor=blue,anchorcolor=blue,citecolor=blue}
\usepackage{color}
\newcommand{\newabstract}[1]{%
	\par\bigskip
	\csname otherlanguage*\endcsname{#1}%
	\csname captions#1\endcsname
	\item[\hskip\labelsep\scshape\abstractname.]
}

\begin{document}

	\baselineskip=17pt

	\title[An Omega result for partial sums of the Riemann zeta function]{An Omega result for partial sums of the Riemann zeta function}

	\author{Qiyu Yang\textsuperscript{1}}
    \author{Shengbo Zhao\textsuperscript{2}}
	\address{1. School of Mathematics and Statistics, Henan Normal University, Xinxiang 453007, CHINA}
	\address{2. School of Mathematical Sciences, Key Laboratory of Intelligent Computing and Applications (Tongji University), Ministry of Education, Tongji University, Shanghai 200092, China}
	\email{qyyang.must@gmail.com}
	\email{shengbozhao@hotmail.com}

	\begin{abstract} 
	   In this paper, we establish an Omega result for partial sums of the Riemann zeta function on the \(1\)-line. Our work relies on the long resonance method and an effective asymptotic formula for smooth numbers. For sufficiently long partial sums, our result approaches the work obtained by Aistleitner, Mahatab and Munsch in 2019.
	\end{abstract}
	
    \keywords{Omega results, partial sums, the Riemann zeta function, smooth numbers}
	
	\subjclass[2020]{Primary 11M06, 11N37.}
	
	\maketitle

\section{Introduction}

Partial sums of the Riemann zeta function \(\zeta(s)\) play a fundamental role in analytic number theory, as they provide finite approximations to the Riemann zeta function and naturally arise in the study of Dirichlet polynomials. In many regions of the complex plane, \(\zeta(s)\) can be effectively approximated by its partial sums with a controllable error term. Consequently, the analytic behavior of these partial sums often reflects that of the Riemann zeta function itself. In particular, the study of Omega results for partial sums not only sheds light on extreme values of the Riemann zeta function but also provides insight into related problems concerning Dirichlet polynomials and multiplicative functions. 

Another motivation for studying Omega results for partial sums comes from the work of Gonek and Ledoan in \cite{gonek2010IMRN}. For \(N \ge 2\), by studying the distribution of the zeros of 
\[
\zeta_N(s) = \sum_{n \le N}\frac{1}{n^s},
\]
they obtained a zero-counting formula and a zero-density estimate to the right of the critical line. These results reflect, to some extent, the behavior of the zeros of \(\zeta(s)\). Since Omega results for \(\zeta(s)\) are closely related to the distribution of its zeros, it is natural to investigate analogous Omega results for \(\zeta_N(s)\). For further work on partial sums of the Riemann zeta function, we recommend \cite{BS2018BLMS,levinson1973USA,Roy2019Adv} and the references therein.

Here and throughout this paper, we write \(\log_j\) for the \(j\)-th iterated logarithm. Then, we have the following result:
\begin{thm}
    \label{thm1}
    Let \( 0<\theta<1\) and \(B \ge 1\) be fixed. For sufficiently large \(T\), let \(N\) satisfy
    \[
    2 \le N \le (\log T \lo T)^B.
    \]
    Then, uniformly for \(N\) in this range, we have
    \[
    \max_{T^\theta \le t \le T}|\zeta_N(1+\mit)| \ge  (\lo T+\lt T) \int_{0}^{\frac{\log N}{\lo T+\lt T}} \rho(u) \mmd u- C_{\theta,B}.
    \]
    Here, \(\rho(u)\) denotes the Dickman function, and \(C_{\theta,B}>0\) is a constant depending only on \(\theta\) and \(B\).
\end{thm}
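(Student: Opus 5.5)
We plan to use the long resonance method of Aistleitner--Mahatab--Munsch. Set \(X = \log T \lo T\), up to a small constant factor that keeps the resonator length below \(T^{\theta}\) in the relevant sense. We take a completely multiplicative resonator \(r(n)\) supported on \(X\)-smooth integers (primes \(p\le X\)), with \(r(p)=1-p/X\) for \(p\le X\), and write \(R(t)=\sum_n r(n)n^{-\mit}\). This is formally an infinite sum, so it is truncated or controlled via its Euler product.

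We then compare two integrals against a Gaussian weight \(\Phi\) with nonnegative Fourier transform \(\whp\). The first is \(I_1=\int \zeta_N(1+\mit)|R(t)|^2\Phi(t/T)\,\mmd t\), and the second is \(I_2=\int |R(t)|^2\Phi(t/T)\,\mmd t\). Expanding gives
\[
I_1 = T\sum_{m,n}\sum_{k\le N}\frac{r(m)r(n)}{k}\,\whp\bigl(T\log(mk/n)/2\pi\bigr).
\]
We keep only the diagonal terms \(n=mk\), which is legitimate because all terms are nonnegative. With multiplicativity this yields
\[
\frac{I_1}{I_2}\ \ge\ \sum_{k\le N}\frac{r(k)}{k}.
\]
The off-diagonal contributions are nonnegative, so they can be dropped. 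The range \(|t|\le T^\theta\) is removed by a trivial bound \(|\zeta_N(1+\mit)|\le \log N+1\) together with \(|R|^2\) estimates. As in AMM, the set \(|t|>T\) is handled by the Gaussian decay of \(\Phi\) and the bound \(R(t)^2\le \prod_p (1-r(p)/p)^{-2}\), which is \(\exp(O(X/\log X))\le T^{o(1)}\)-type size after calibration. This requires choosing \(X\) with a constant \(c<1\) depending on \(\theta\), and it costs \(O_\theta(1)\) in the final answer. This gives
\[
\max_{T^\theta\le t\le T}|\zeta_N(1+\mit)|\ \ge\ \sum_{k\le N}\frac{r(k)}{k}-O(1).
\]

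It then remains to evaluate \(S=\sum_{k\le N}r(k)/k\) from below, and this is where smooth numbers enter. Since \(r(p)=1-p/X\ge 1-\delta\) on most primes, compare \(S\) with \(\sum_{k\le N,\,P(k)\le Y}1/k\), where \(Y=X/\log X\)-ish; more precisely, losing a factor \(\prod_{p\le X}(1-p/X)\)-type corrections. A cleaner route is to restrict to \(Y\)-smooth \(k\) with \(Y=X/(\lo T)^{A}\), where \(r(p)\ge 1-(\lo T)^{-A}\); since \(\Omega(k)\ll \log N\ll \lo T\), this gives \(r(k)\ge 1-O(1/\lo T)\). The scale \(\lo T+\lt T = \log(\log T\lo T)\) then indicates that \(Y\) should satisfy \(\log Y=\lo T+\lt T+O(1)\). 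That is possible only if the loss from \(r(p)<1\) is handled differently: use \(r(p)=1\) for \(p\le X/ \lo X\) and \(\log Y = \log X - \lt X\), accepting an \(O(1)\) change in \(\log Y\). Then apply the effective smooth-number asymptotic \(\Psi(x,Y)=x\rho(u)(1+O(\log(u+1)/\log Y))\) uniformly for \(x\le N\le Y^{B'}\). Partial summation gives
\[
\sum_{k\le N,\,P(k)\le Y}\frac1k=\int_1^N\frac{\mmd\Psi(x,Y)}{x}=\log Y\int_0^{\log N/\log Y}\rho(u)\,\mmd u+O_B(1).
\]
Finally, replacing \(\log Y\) by \(\lo T+\lt T\) changes the expression by \(O(1)\), because the function \(L\mapsto L\int_0^{\log N/L}\rho\) has derivative bounded by \(O(1)\) for \(u\le B+1\).

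The main obstacle is the bookkeeping that keeps all losses \(O_{\theta,B}(1)\) rather than \(O(\log_3 T)\). The choice of resonator must be made so that its smoothness parameter has \(\log Y=\lo T+\lt T+O_\theta(1)\), while \(|t|>T\) and \(|t|<T^\theta\) remain negligible. The uniformity of the smooth-number asymptotic for small \(u\), including small \(N\), also needs care. For \(N\le Y\) the bound is simply \(\sum_{k\le N}1/k\), which equals \(\log N+O(1)\) and matches \(\rho=1\) on \([0,1]\).
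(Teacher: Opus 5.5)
There is a genuine gap in your evaluation of $S=\sum_{k\le N}r(k)/k$, and it sits exactly at the obstacle you flagged: as written, the method cannot give an $O_{\theta,B}(1)$ error.

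\textbf{Why the smooth-number restriction fails.} To get $r(k)\ge 1-o(1)$ pointwise on $Y$-smooth $k\le N$, you need $r(p)\ge 1-\delta$ for $p\le Y$ (so $Y\le\delta X$) with $\delta\,\Omega(k)=o(1)$. Since $\Omega(k)$ can be $\asymp_B\log_2 T$, this forces $\log X-\log Y\ge\log_3 T+\omega(1)$.

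Your fallback $Y=X/\log X$ is not ``an $O(1)$ change in $\log Y$'' either, since then $\log X-\log Y=\log_2 X\to\infty$. You also cannot ``use $r(p)=1$'': $r$ is already fixed as $1-p/X$, and $r(p)=1$ would make $R$ blow up at $t=0$.

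This loss is real. Put $F(L)=L\int_0^{\log N/L}\rho(u)\,\mathrm{d}u$. Then $F'(L)=\int_0^v\rho-v\rho(v)$ with $v=\log N/L$. This equals $v-1$ for $1\le v\le 2$ and is positive for every $v>1$. So once $\log N\ge(1+c)\log X$, your lower bound $F(\log Y)-O(1)$ falls short of the target $F(\log X)-O(1)$ by $\gg\log_3 T$. That is precisely the $\log_3 T$ term the theorem is about. Your argument suffices only for $N\le X$, where the target is essentially $\log N$.

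\textbf{The missing idea.} The paper controls $1-r(k)$ on average over all $X$-smooth $k$, not pointwise. From $1-\prod_j(1-x_j)\le\sum_j x_j$ and $1-(1-x)^\alpha\le\alpha x$ one gets $1-r(k)\le X^{-1}\sum_{p\le X}\nu_p(k)p$. Hence, with $\mathcal{A}(t,X)=\sum_{k\le t,\,P^+(k)\le X}r(k)$,
\[
0\le\Psi(t,X)-\mathcal{A}(t,X)\le\frac1X\sum_{p\le X}p\sum_{\beta\ge1}\Psi\Bigl(\frac{t}{p^\beta},X\Bigr)\le\frac tX\sum_{p\le X}\frac{p}{p-1}\ll\frac{t}{\log X}.
\]
With Lemma~\ref{lem1} this gives $\mathcal{A}(t,X)=t\rho(\log t/\log X)+O(t/\log X+1)$. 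Partial summation then yields $S=\log X\int_0^{\log N/\log X}\rho+O_B(1)$ with the full $\log X$. The only remaining loss is $O_\theta(1)$, from writing $\log X=\log_2T+\log_3T+\log\eta$.

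\textbf{Smaller repairs.} First, keeping only the diagonal $n=mk$ gives $I_1\ge T\widehat{\Phi}(0)\sum_k\frac{r(k)}{k}\sum_m r(m)^2$. But $I_2$ also contains positive off-diagonal terms, which are not negligible for this infinitely long resonator. So no bound on $I_1/I_2$ follows. You must keep all terms with $k\mid n$: write $n=k\ell$ with $m,\ell$ free, and complete multiplicativity makes the retained sum exactly $S\cdot I_2$.

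Second, with the weight $\Phi(t/T)$ the range $|t|>T$ cannot be discarded, since the weight is still $\ge e^{-2}$ on $T\le|t|\le 2T$. The paper uses $\Phi(t\log T/T)$ instead. It pairs this with the bound $|R(t)|\le\prod_{p\le X}(1-r(p))^{-1}=T^{\eta+o(1)}$, not $\prod_p(1-r(p)/p)^{-1}$. That is why $\theta+2\eta<1$ is needed.
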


In 2019, Aistleitner, Mahatab and Munsch \cite{aistleitner2019IMRN} proved that there is a constant \(C\) such that
\[
\max_{\sqrt{T} \le t \le T} |\zeta(1+\mit)| \ge \mme^\gamma(\lo T+\lt T-C)
\]
for sufficiently large \(T\), where \(\gamma\) is the Euler-Mascheroni constant. Their work refines the earlier result of Levinson \cite{levinson1972AA}, and the work of Granville and Soundararajan \cite{Gran06extreme}. Furthermore, when \(N= (\log T \lo T)^B\) with \(B\) sufficiently large, our Theorem \ref{thm1} implies that for every \(\varepsilon>0\), one has
\[
\max_{T^\theta \le t \le T}|\zeta_N(1+\mit)| \ge (\mme^\gamma-\varepsilon)(\lo T+\lt T) - C_{\theta,B}.
\]
Thus, compared with the result of \cite{aistleitner2019IMRN}, sufficiently long partial sums recover the main order of magnitude of extreme values of the Riemann zeta function on the \(1\)-line.

On the other hand, if we set \(B=1\) in Theorem \ref{thm1}, 
\[
\max_{T^\theta \le t \le T}|\zeta_N(1+\mit)| \ge \log N -C_\theta
\]
holds uniformly for \(N \in [2, \log T \lo T]\). This reduces Theorem \ref{thm1} to a simpler form.

Our proof of Theorem \ref{thm1} is based on the resonance method. Since its introduction by Soundararajan \cite{soundararajan2008extreme}, this method has been widely used in the study of Omega results. It has subsequently been refined by Aistleitner \cite{aistleitner2016MathAnn}, Bondarenko and Seip \cite{bondarenko2018argument}, and others, leading to sharper Omega results. For further results and details concerning the resonance method, we recommend \cite{Aistleitner2019QJMath,bondarenko2023dichotomy,tenen2019galsum} and the references therein.

We organize this paper as follows. In Section \ref{sec-pre}, we present some preliminary work, including an important lemma and the construction of the resonator. In Section \ref{sec-proof-thm1}, we prove Theorem \ref{thm1}. Finally, in Section \ref{sec-final}, we give some comments.

\section{Preliminaries}
\label{sec-pre}

In this section, we carry out some preliminary work.  Let \(P^+(n)\) denote the largest prime factor of \(n\), and \(P^+(1)=1\). Then let \(\Psi(x,y)\) count the number of integers \(n\) not exceeding \(x\) with prime factors at most \(y\), that is,
\[
 \Psi(x,y) := \sum_{\substack{n \le x \\ P^+(n) \le y}} 1.
\]
First, we state the following approximate formula for \(\Psi(x,y)\):
\begin{lem}
\label{lem1}
    Let \(x \ge y \ge 2\), and put \(u =\log x/ \log y\). Then, the following estimate holds uniformly
    \[
    \Psi(x,y) = x \rho(u)+ O\Bg(\frac{x}{\log y}\Bg).
    \]
    Here, \(\rho(u)\) denotes the Dickman function.
\end{lem}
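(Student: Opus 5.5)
The estimate \(\Psi(x,y)=x\rho(u)+O(x/\log y)\), uniform for \(x\ge y\ge 2\), is classical. I would prove it by induction on \(\lfloor u\rfloor\), using the Buchstab identity together with the Mertens/prime number theorem. Write \(E(x,y):=\Psi(x,y)-x\rho(u)\). The goal is to show \(|E(x,y)|\le Kx/\log y\) for a suitable absolute constant \(K\).

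\textbf{Base case.} For \(1\le u\le 2\), every \(n\le x\) has at most one prime factor \(p>y\), since \(p^2>y^2\ge x\). Hence
\[
\Psi(x,y)=\lfloor x\rfloor-\sum_{y<p\le x}\Bigl\lfloor \frac{x}{p}\Bigr\rfloor .
\]
By the Mertens-type formula \(\sum_{p\le z}1/p=\log_2 z+c+O(1/\log z)\) and the Chebyshev bound \(\pi(x)\ll x/\log x\), this equals
\[
x-x\log u+O\Bigl(\frac{x}{\log y}\Bigr)=x\rho(u)+O\Bigl(\frac{x}{\log y}\Bigr),
\]
because \(\rho(u)=1-\log u\) on \([1,2]\). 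The range \(u\le 1\) is trivial, since \(\Psi(x,y)=\lfloor x\rfloor\).

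\textbf{Inductive step.} For \(u>2\) I would use the Buchstab identity
\[
\Psi(x,y)=\Psi(x,\sqrt{x})-\sum_{y<p\le \sqrt{x}}\Psi\Bigl(\frac{x}{p},p\Bigr),
\]
or, to keep the induction on integer parts of \(u\) clean, the variant with \(\Psi(x,x^{1/k})\). Each term \(\Psi(x/p,p)\) has parameter \(u_p=\log(x/p)/\log p<u-1\), so the inductive hypothesis applies. The main terms come to
\[
\sum_{y<p\le\sqrt x}\frac{x}{p}\,\rho(u_p).
\]
By partial summation against \(\pi(t)=\mathrm{li}(t)+O(t/\log^2 t)\), this equals
\[
x\int_y^{\sqrt x}\rho\Bigl(\frac{\log x}{\log t}-1\Bigr)\frac{\mathrm{d}t}{t\log t}+O\Bigl(\frac{x}{\log y}\Bigr),
\]
and the integral reproduces \(\rho(2)-\rho(u)\) through the integral equation \(u\rho(u)=\int_{u-1}^{u}\rho(v)\,\mathrm{d}v\), equivalently \(\rho(u)=\rho(2)-\int_2^u\rho(v-1)\,\mathrm{d}v/v\) after the substitution \(v=\log x/\log t\).

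\textbf{The main obstacle.} The hard part is keeping the error uniform so that the constant \(K\) does not grow with each induction step. Summing the inductive errors naively gives
\[
\sum_{y<p\le\sqrt x}\frac{Kx}{p\log p}\approx \frac{Kx}{\log y},
\]
which has the same size as the allowed error and so would make \(K\) grow with \(u\). To avoid this I would not use a blanket \(O(x/\log y)\) in the induction. Instead I would prove the sharper intermediate statement
\[
\Psi(x,y)=x\rho(u)+O\Bigl(\frac{x\,\rho(u-1)}{\log y}\Bigr)
\]
for \(y\) beyond a fixed constant, following de Bruijn and Hildebrand. The rapid decay \(\rho(u-1)\ll 1/\Gamma(u)\) then makes the error sum converge, so \(K\) stays bounded. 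Since \(\rho(u-1)\le 1\), the lemma follows. Small \(y\), say \(y\) bounded, is handled trivially: there \(x/\log y\gg x\ge\Psi(x,y)\), and \(x\rho(u)\le x\).

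Alternatively, the paper may simply cite the lemma from Tenenbaum's book (Corollary III.5.19). In a written proof I would state this reference and sketch the induction above only as justification.
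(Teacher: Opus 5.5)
The paper does not prove this lemma; it quotes it from Tenenbaum's book (Section~5.3, Theorem~6). Your fallback of citing the book is therefore exactly the paper's route, and your base case $1\le u\le 2$ and the Buchstab main-term computation are correct. The self-contained sketch, however, fails at the step you yourself identify as the main obstacle. The intermediate estimate $\Psi(x,y)=x\rho(u)+O\bigl(x\rho(u-1)/\log y\bigr)$, uniformly for $x\ge y\ge y_0$, is false. For $y=\sqrt{\log x}$ one has $u=2\log x/\log\log x$, hence $x\rho(u-1)\le x/\Gamma(u)=x^{-1+o(1)}$, while $\Psi(x,y)\ge 1$. (In a restricted range such as Hildebrand's $\log y\ge(\log\log x)^{5/3+\varepsilon}$ it does hold, but only as a consequence of a much deeper theorem.)

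Nor can such a bound be propagated through Buchstab's identity once $u$ is large. $\Psi(x,\sqrt{x})$ and the terms with $x^{1/3}<p\le\sqrt{x}$ have size $\asymp x$, and they genuinely deviate from their main terms by amounts of order $x/\log x=x/(u\log y)$. These deviations cancel in the full sum, but an induction that only carries upper bounds for the errors cannot see the cancellation. So it never yields an error below about $x/(u\log y)$, which is far larger than $x\rho(u-1)/\log y$. Independently, your partial summation with $\pi(t)=\mathrm{li}(t)+O(t/(\log t)^2)$ already costs $O(x/\log y)$ at every level, which by itself would make the constant grow linearly in $u$.

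What is missing is a genuine contraction in the error recursion. The standard device is Hildebrand's identity
\[
\Psi(x,y)\log x=\int_1^x\Psi(t,y)\,\frac{\mathrm{d}t}{t}+\sum_{\substack{p^\nu\le x\\ p\le y}}\Psi\Bigl(\frac{x}{p^\nu},y\Bigr)\log p,
\]
which comes from $\log n=\sum_{d\mid n}\Lambda(d)$. If $\Psi(t,y)$ is replaced by $t\rho(\log t/\log y)$, Mertens' theorem shows that the right-hand side becomes $x\log y\int_{u-1}^{u}\rho(v)\,\mathrm{d}v+O(x)=x\rho(u)\log x+O(x)$. Because everything is divided by $\log x=u\log y$, an inherited error of size $Kx/\log y$ returns as roughly $(K/u)\,x/\log y\le (K/2)\,x/\log y$ for $u\ge 2$. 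A bounded $K$ then follows using nothing beyond Mertens' estimate.

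Alternatively you can keep Buchstab's identity. The error of $\Psi(x,\sqrt{x})$ is at most $(2K_0/u)\,x/\log y$, which exactly compensates the factor $1-2/u$ in $\sum_{y<p\le\sqrt{x}}Kx/(p\log p)\approx(1-2/u)\,Kx/\log y$. So the plain $O(x/\log y)$ induction does close, under two provisos. First, every other loss per step must be $o(1/u)$ relative to $x/\log y$; the prime number theorem with error $O(t/(\log t)^3)$ suffices for this. Second, the range $u\ge 4\log\log y$ must be handled separately, by Rankin's bound $\Psi(x,y)\ll x(\log y)\mathrm{e}^{-u/2}$ together with $\rho(u)\le 1/\Gamma(u+1)$.
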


\begin{proof}
    This is \cite[Section 5.3, Theorem 6]{tenenbaumbook}.
\end{proof}

Then, we construct the resonator \(R(t)\) in order to apply the long resonance method in \cite{aistleitner2019IMRN}. Let \(T\) be sufficiently large, and set \(X=\eta \log T \lo T\), where \(\eta = (1-\theta)/4 >0\). Define \(r(n)\) to be a completely multiplicative function whose values at primes \(p\) are given by
\begin{equation*}
    r(p) =
    \begin{cases}
        1-\frac{p}{X}, ~ &\operatorname{if} p \le X, \\
		0, ~ &\operatorname{if} p > X.
    \end{cases}
\end{equation*}
Furthermore, we define the resonator \(R(t)\) as follows
\[
R(t) = \sum_{n =1}^{\infty}r(n)n^{\mit} = \prod_{p \le X}\bg(1-r(p)p^{\mit} \bg)\inv.
\]
The prime number theorem yields that
\[
\log |R(t)| \le \sum_{p \le X}(\log X - \log p) = \bg(1+o(1)\bg)\frac{X}{\log X}.
\]
Thus, using the definition of \(X\), we have
\begin{align}
    \label{Rupper}
    |R(t)|^2 \le T^{2\eta+o(1)}.
\end{align}

\section{Proof of Theorem \ref{thm1}}
\label{sec-proof-thm1}

In this section, we prove Theorem \ref{thm1}. As in \cite{bondarenko2017Duke}, let \(\ph(t) := \mme^{-t^2/2}\) be the Gaussian function. Then \(\ph\) and its Fourier transform \(\whp(\xi)=\sqrt{2\pi}\Phi(\xi)\) are both positive. Since \(\ph\) decays exponentially, all relevant sums and integrals below are absolutely convergent, allowing us to interchange summation and integration freely. To apply the resonance method, we define the following four integrals:
\begin{align*}
    M_1 &:= M_1(R,T) = \int_{T^\theta}^T |R(t)|^2 \ph\Bg(\frac{t\log T}{T} \Bg) \mdt, \\
    M_2 &:= M_2(R,T) = \int_{T^\theta}^T \zeta_N(1+\mit)|R(t)|^2 \ph\Bg(\frac{t\log T}{T} \Bg) \mdt, \\
    I_1 &:= I_1(R,T) = \int_{-\infty}^{\infty} |R(t)|^2 \ph\Bg(\frac{t\log T}{T} \Bg) \mdt, \\
    I_2 &:= I_2(R,T) = \int_{-\infty}^{\infty} \zeta_N(1+\mit)|R(t)|^2 \ph\Bg(\frac{t\log T}{T} \Bg) \mdt,
\end{align*}
where \(0<\theta<1\). Trivially, we have
\[
\max_{T^\theta \le t \le T}|\zeta_N(1+\mit)| \ge \frac{|M_2|}{M_1}.
\]

To utilize properties of the Fourier transform of \(\ph\), we extend integrals \(M_1\) and \(M_2\) to the real axis. Since \(N \le (\log T \lo T)^B\), we have
\begin{align}
    \label{partialupper}
    |\zeta_N(1+\mit)| \le \sum_{n \le N}\Bg|\frac{1}{n^{1+\mit}}\Bg| \ll \log N \ll T^{o(1)}.
\end{align}
Thus, combining \eqref{Rupper} and \eqref{partialupper} shows that
\begin{align}
    \label{error1}
    \bgg|\int_{|t| < T^\theta} \zeta_N(1+\mit)|R(t)|^2 \ph\Bg(\frac{t\log T}{T} \Bg) \mdt\bgg| \ll T^{\theta+2\eta+o(1)}.
\end{align}
For the tail integral, the rapid decay of \(\ph\) gives 
\begin{align}
    \label{error2}
    \bgg|\int_{|t| > T} \zeta_N(1+\mit)|R(t)|^2 \ph\Bg(\frac{t\log T}{T} \Bg) \mdt\bgg| \ll 1.
\end{align}
Using \eqref{error1} and \eqref{error2} shows that 
\begin{align}
    \label{M2I2}
    2M_2 = I_2 + O\bg(T^{\theta+2\eta+o(1)}\bg).
\end{align}
Similarly, 
\begin{align}
    \label{M1I1}
    2M_1 = I_1 + O\bg(T^{\theta+2\eta+o(1)}\bg).
\end{align}
On the other hand, expanding \(|R(t)|^2\) in \(I_1\), we have
\[
I_1 = \int_{-\infty}^\infty \sum_{m,n=1}^\infty r(m)r(n) \Bg(\frac{m}{n}\Bg)^{\mit}\ph\Bg(\frac{t\log T}{T} \Bg) \mdt.
\]
Since \(r(n)\) is completely multiplicative, \(r(1)=1\). Together with the positivity of \(\whp\), this gives
\begin{align}
    \label{I1lower}
    I_1 \gg T^{1+o(1)}.
\end{align}
Combining \eqref{M2I2}, \eqref{M1I1} and \eqref{I1lower} implies that
\begin{align}
    \label{max}
    \max_{T^\theta \le t \le T}|\zeta_N(1+\mit)| \ge \frac{I_2}{I_1} + O\bg(T^{\theta+2\eta-1+o(1)}\bg).
\end{align}
With \(\eta = (1-\theta)/4\), the error term on the right-hand side of \eqref{max} is small.

We now derive a lower bound for the ratio \(I_2/I_1\). To this end, expanding both \(|R(t)|^2\) and \(\zeta_N(1+\mit)\) in \(I_2\) yields that
\begin{align*}
    I_2 &= \int_{-\infty}^{\infty} \sum_{k \le N}\frac{1}{k^{1+\mit}} \sum_{m,n \ge 1}^\infty r(m)r(n)\Bg(\frac{m}{n}\Bg)^{\mit}\ph\Bg(\frac{t\log T}{T} \Bg) \mdt\\
    &= \sum_{k \le N}\frac{1}{k}  \sum_{m,n \ge 1}^\infty r(m)r(n) \int_{-\infty}^{\infty}\Bg(\frac{m}{kn}\Bg)^{\mit}\ph\Bg(\frac{t\log T}{T} \Bg) \mdt.
\end{align*}
The positivity of \(\whp\) allows us to restrict to the terms with \(k \mid m\). More precisely, writing \(m = k\ell\), and using the definition of \(I_1\), we obtain that
\begin{align*}
    I_2 & \ge \sum_{k \le N}\frac{1}{k} \sum_{\ell,n \ge1}^\infty r(k\ell)r(n)\int_{-\infty}^{\infty}\Bg(\frac{k\ell}{kn}\Bg)^{\mit}\ph\Bg(\frac{t\log T}{T} \Bg) \mdt \\
    & = \sum_{k \le N}\frac{r(k)}{k} \sum_{\ell,n \ge1}^\infty r(\ell)r(n)  \int_{-\infty}^{\infty}\Bg(\frac{\ell}{n}\Bg)^{\mit}\ph\Bg(\frac{t\log T}{T} \Bg) \mdt \\
    & = \sum_{k \le N}\frac{r(k)}{k} \cdot I_1.
\end{align*}
Here, we use the fact that \(r(n)\) is a completely multiplicative function. Thus, it follows that
\begin{align}
    \label{I2I1lower}
    \frac{I_2}{I_1} \ge \sum_{k \le N}\frac{r(k)}{k} =: D_N.
\end{align}

Since \(r(k)=0\) whenever \(P^+(k)>X\), we may write
\[
D_N=\sum_{\substack{k\le N\\ P^+(k)\le X}}\frac{r(k)}{k}.
\]
Define
\begin{align*}
    \mca(x,y) := \sum_{\substack{n \le x \\ P^+(n) \le y}} r(n). 
\end{align*}
By partial summation, we have
\begin{align}
    \label{DNeq}
    D_N=\frac{\mca(N,X)}{N}+ \int_1^N\frac{\mca(t,X)}{t^2}\mdt.
\end{align}

We next show that, when \(X\) is sufficiently large,
\begin{align}
    \label{APapproximate}
    \mca(t,X) = \Psi(t,X)\Bg(1+O_B\Bg(\frac{1}{\log X}\Bg) \Bg)
\end{align}
holds uniformly for \(2 \le t \le X^B\). By the definition of the completely multiplicative function \(r(n)\),
for every prime \(p\le X\),
\[
0\le r(p)=1-\frac{p}{X}\le 1.
\]
Hence,
\[
0\le r(k)\le 1
\]
whenever \(P^+(k)\le X\), and therefore
\[
0\le \mca(t,X)\le \Psi(t,X).
\]
Moreover,
\begin{align}
\label{PminusA}
    \Psi(t,X) - \mca(t,X) = \sum_{\substack{k \le t \\ P^+(k) \le X}} (1-r(k)).
\end{align}
Write
\[
k = \prod_{p \le X} p^{\nu_p(k)},
\]
where \(\nu_p(k)\) denotes the exponent of the prime \(p\) in the prime factorization of \(k\). Since \(r(n)\) is completely multiplicative, we have
\[
r(k) = \prod_{p \le X}\Bg(1-\frac{p}{X}\Bg)^{\nu_p(k)}.
\]
Using the following classical inequality
\begin{align}
    \label{classical1}
        1 - \prod_j (1-x_j) \le \sum_j x_j, \quad 0 \le x_j \le 1,
\end{align}
together with
\begin{align}
    \label{classical2}
    1-(1-x)^\alpha \le \alpha x, \quad 0 \le x \le 1,
\end{align}
yields that
\[
1-r(k) \le \frac{1}{X}\sum_{p \le X}\nu_p(k)p.
\]
Consequently, we obtain
\begin{align}
    \label{PminusA2}
    0 \le \Psi(t,X) - \mca(t,X) \le \frac{1}{X}\sum_{\substack{k\le t \\ P^+(k) \le X}} \sum_{p \le X} \nu_p(k)p =  \frac{1}{X} \sum_{p \le X}p \sum_{\substack{k\le t \\ P^+(k) \le X}}\nu_p(k).
\end{align}
Note that
\[
\nu_p(k) = \sum_{\beta \ge 1} \mathds{1}_{p^\beta \mid k},
\]
where \(\mathds{1}\) indicates the indicator function. Therefore, the inner sum on the right-hand side of \eqref{PminusA2} is bounded by
\[
\sum_{\substack{k\le t \\ P^+(k) \le X}}\sum_{\beta \ge 1} \mathds{1}_{p^\beta \mid k} =\sum_{\beta \ge 1} \sum_{\substack{p^\beta \ell \le t \\ P^+(p^\beta \ell) \le X}} 1 \le \sum_{\substack{p^\beta \ell \le t \\ P^+(\ell) \le X}} 1 =\sum_{\beta \ge 1} \Psi\Bg(\frac{t}{p^\beta},X\Bg).
\]
Plugging this into \eqref{PminusA2} gives 
\begin{align}
    \label{PminusA3}
    0 \le \Psi(t,X) - \mca(t,X) \le \frac{1}{X} \sum_{p \le X}p \sum_{\beta \ge 1} \Psi\Bg(\frac{t}{p^\beta},X\Bg).
\end{align}

For \(2\le t\le X\), we have
\[
\Psi\left(\frac{t}{p^\beta},X\right) = \Bg\lfloor\frac{t}{p^\beta}\Bg\rfloor,
\]
thus, 
\[
\Psi(t,X)-A(t,X) \le\frac{t}{X}\sum_{p\le t}p\sum_{\beta\ge1}\frac1{p^\beta} 
= \frac{t}{X}\sum_{p \le t}\frac{p}{p-1} \ll \frac{t}{\log X}.
\]
Since \(\Psi(t,X) = \lfloor t \rfloor \asymp t\) as \(t \le X\), it follows that
\begin{align}
    \label{final1}
    \Psi(t,X)-A(t,X)\ll\frac{\Psi(t,X)}{\log X}.
\end{align}

On the other hand, for \(X<t\le X^B\), set
\[
v=\frac{\log t}{\log X}\in(1,B].
\]
It follows from Lemma \ref{lem1} that
\begin{align}
    \label{dickman}
    \Psi(t,X) = t\rho(v) \Bg(1+O_B\Bg(\frac{1}{\log X}\Bg)\Bg)
\end{align}
holds uniformly in this range. Since \(1 < v \le B\), we have \(\rho(v) \ge C >0\) for some constant \(C\). Thus, \(\Psi(t,X) \asymp_B t\). Combining this with \eqref{PminusA3} and \eqref{dickman}, we obtain
\[
\Psi(t,X) - \mca(t,X) \ll_B \frac{\Psi(t,X)}{X} \sum_{p \le X}p \sum_{\beta \ge 1}\frac{1}{p^\beta} = \frac{\Psi(t,X)}{X}\sum_{p \le X} \frac{p}{p-1}.
\]
Applying the prime number theorem, we have
\begin{align}
    \label{final2}
    \Psi(t,X) - \mca(t,X) \ll_B\frac{\Psi(t,X)}{\log X}.
\end{align}
It follows from \eqref{final1} and \eqref{final2} that, for all sufficiently large \(X\), \eqref{APapproximate} holds uniformly for \(1 \le t \le X^B\). 

We now derive a uniform approximation for \(\mca(t,X)\). When \(1\le t\le X\), since \(\rho(u)=1\) for \(0\le u\le1\), we have
\[
\Psi(t,X)=\lfloor t\rfloor=t\rho\Bg(\frac{\log t}{\log X}\Bg)+O(1).
\]
Furthermore, when \(X<t\le X^B\), \eqref{dickman} gives that 
\[
\Psi(t,X)=t\rho\Bg(\frac{\log t}{\log X}\Bg)+O_B\Bg(\frac{t}{\log X}\Bg).
\]
Hence, uniformly for \(1\le t\le X^B\), we have
\begin{align}
    \label{Aapproximate}
    \mca(t,X)=t\rho\Bg(\frac{\log t}{\log X}\Bg)+O_B\Bg(\frac{t}{\log X}+1\Bg).
\end{align}

Substituting \eqref{Aapproximate} into \eqref{DNeq}, we obtain
\begin{align}
    \label{Dfinal}
    D_N = \log X\int_0^{\frac{\log N}{\log X}}\rho(u) \mmd u +O_B(1),
\end{align}
since \(N \le X^B\).

Combining \eqref{max}, \eqref{I2I1lower} and \eqref{Dfinal}, we conclude that
\[
\max_{T^\theta\le t\le T}|\zeta_N(1+\mit)| \ge \log X \int_0^{\frac{\log N}{\log X}}\rho(u) \mmd u +O_B(1).
\]
Since \(X=\eta\log T \lo T\), we complete the proof of Theorem \ref{thm1}.

\section{Final comments}
\label{sec-final}

For fixed \(\sigma \in (1/2,1)\) and sufficiently large \(T\), Aistleitner \cite{aistleitner2016MathAnn} showed that 
\[
\max_{0 \le t \le T} |\zeta(\sigma+\mit)| \ge \exp\Bg(\frac{c_\sigma(\log T)^{1-\sigma}}{(\lo T)^\sigma} \Bg),
\]
where \(c_\sigma=0.18(2\sigma-1)^{1-\sigma}\). In fact, by applying the long resonance method from \cite{yang2023omega} and following an argument similar to that used in Section \ref{sec-proof-thm1}, one can obtain an Omega result for partial sums of the Riemann zeta function in the critical strip. More precisely, for \(N\) in the same range as in Theorem \ref{thm1}, we have
    \[
    \max_{T^\theta \le t \le T}|\zeta_N(\sigma+\mit)| \ge \frac{N^{1-\sigma}}{1-\sigma}\rho \Bg(\frac{\log N}{\lo T+\lt T} \Bg) + O_{\sigma,\theta,B}\Bg(\frac{N^{1-\sigma}}{\lo T} +1\Bg).
    \]
Since \(N \le X^B\), this resonator can capture only part of the full Euler product of the Riemann zeta function. Consequently, the above result is significantly weaker than the corresponding Omega result of Aistleitner \cite{aistleitner2016MathAnn}. A sharper bound would require \(N\) to reach at least \(\exp (c(\log T \lo T)^{1-\sigma})\) for some constant \(c>0\). In this range, however, estimates involving the smooth number counting function \(\Psi(x,y)\) may no longer be applicable. Overcoming this difficulty appears to be an interesting problem for further study.

\section*{Acknowledgments}
Qiyu Yang was supported by the Natural Science Foundation of Henan Province (Grant No. 252300421782). 
	
	\bibliographystyle{siam}
    \bibliography{reference}
\end{document}